\newtheorem{thm}{Theorem}[section]
\newtheorem{theorem}[thm]{Theorem}
\newtheorem{corollary}[thm]{Corollary}
\newtheorem{proposition}[thm]{Proposition}
\theoremstyle{remark}
\newtheorem{example}[thm]{Example}
\begin{document}

\title{A note on scalable frames}
\author[Cahill, Chen
 ]{Jameson Cahill and Xuemei Chen}
\address{Department of Mathematics, University
of Missouri, Columbia, MO 65211-4100}

\thanks{The first author was supported by
 NSF DMS 1008183; and  NSF ATD 1042701;  AFOSR  DGE51:  FA9550-11-1-0245}

\email{jameson.cahill@gmail.com}
\email{xuemeic@math.umd.edu}

\begin{abstract}
We study the problem of determining whether a given frame is scalable, and when it is, understanding the set of all possible scalings.  We show that for most frames this is a relatively simple task in that the frame is either not scalable or is scalable in a unique way, and to find this scaling we just have to solve a linear system.  We also provide some insight into the set of all scalings when there is not a unique scaling.  In particular, we show that this set is a convex polytope whose vertices correspond to minimal scalings.
\end{abstract}

\maketitle

\section{Introduction}
A collection of vectors $\{\varphi_i\}_{i=1}^n\subseteq\mathbb{C}^d$ is called a \textit{frame} if there are positive numbers $A\leq B<\infty$ such that
$$
A\|x\|^2\leq\sum_{i=1}^n|\langle x,\varphi_i\rangle|^2\leq B\|x\|^2
$$
for every $x$ in $\mathbb{C}^d$.  If we have $A=B$ we say the frame is \textit{tight}, and if $A=B=1$ we say it is a \textit{Parseval frame}.  Given a frame $\{\varphi_i\}_{i=1}^n$ we define the \textit{frame operator} $S:\mathbb{C}^d\rightarrow\mathbb{C}^d$ by
\begin{equation}\label{FO}
Sx=\sum_{i=1}^n\langle x,\varphi_i\rangle \varphi_i.
\end{equation}
It is easy to see that $S$ is always positive, invertible, and Hermitian.  Furthermore, $\{\varphi_i\}_{i=1}^n$ is a Parseval frame if and only if $S=I_d$ (the identity operator on $\mathbb{C}^d$).  By a slight abuse of notation, given any set of vectors $\{\varphi_i\}_{i=1}^n\subseteq\mathbb{C}^d$ we will refer to the operator defined in \eqref{FO} as their frame operator, even if they do not form a frame (in this case $S$ will not be invertible, but it will still be positive).  If we have that $\|\varphi_i\|=1$ for every $i=1,...,n$ we say it is a \textit{unit norm} frame.  For more background on finite frames we refer to the book \cite{book}.

A frame $\{\varphi_i\}_{i=1}^n$ is said to be \textit{scalable} if there exists a collection of scalars $\{v_i\}_{i=1}^n\subseteq\mathbb{C}$ so that $\{v_i\varphi_i\}_{i=1}^n$ is a Parseval frame.  In this case, we call the vector $(|v_1|^2,...,|v_n|^2)\in\mathbb{R}_+^n$ a \textit{scaling} of $\{\varphi_i\}_{i=1}^n$.  Scalable frames have been studied previously in \cite{scalable}.

We will work in the space $\mathbb{H}_{d\times d}$ of all $d\times d$ Hermitian matrices.  Note that this is a \textbf{real} vector space of dimension $d^2$ (it is not a space over the complex numbers since a Hermitian matrix multiplied by a complex scalar is no longer Hermitian).  The inner product on this space is given by $\langle S,T\rangle=\mathrm{Trace}(ST)$ and the norm induced by this inner product is the Froebenius norm, \textit{i.e.}, $\langle S,S\rangle=\|S\|_F^2$.  

In what follows we will always consider frames in the complex space $\mathbb{C}^d$, however all of our results hold in the real space $\mathbb{R}^d$ as well.  The only difference is in this case we must replace the space $\mathbb{H}_{d\times d}$ with its subspace $\mathbb{S}_{d\times d}$ consisting of all $d \times d$ real symmetric matrices, which is a real vector space of dimension $d(d+1)/2$.  Thus, if one replaces $\mathbb{H}_{d\times d}$ with $\mathbb{S}_{d\times d}$ and $d^2$ with $d(d+1)/2$ all of our results will hold for frames $\{\varphi_i\}_{i=1}^n\subseteq\mathbb{R}^d$ and the same proofs will work.

\section{Scaling generic frames}

Consider the mapping from $\mathbb{C}^d$ to $\mathbb{H}_{d\times d}$ given by
$$
x\mapsto xx^*.
$$
Note that $xx^*$ is the rank one projection onto $\mathrm{span}\{x\}$ scaled by $\|x\|^2$.  $xx^*$ is called the \textit{outer product} of $x$ with itself.  Also note that if $x=\lambda y$ for $\lambda\in\mathbb{C}$ then $xx^*=(\lambda y)(\lambda y)^*=|\lambda|^2yy^*$.

Given a frame $\{\varphi_i\}_{i=1}^n$, in this setting we have that the frame operator is given by
$$
S=\sum_{i=1}^n\varphi_i\varphi_i^*,
$$
so $\{\varphi_i\}_{i=1}^n$ is scalable if and only if there exists a collection of \textbf{positive} scalars $\{w_i\}_{i=1}^n$ so that
$$
\sum_{i=1}^nw_i\varphi_i\varphi_i^*=I_d,
$$
in this case $\{\sqrt{w_i}\varphi_i\}_{i=1}^n$ is a Parseval frame, and the vector $(w_1,...,w_n)\in\mathbb{R}^n_+$ is the scaling.

Before stating our first theorem we need one more definition.  A subset $Q\subseteq\mathbb{R}^n$ is called \textit{generic} if there exists a polynomial $p(x_1,...,x_n)$ such that $Q^c=\{(x_1,...,x_n)\in\mathbb{R}^n:p(x_1,...,x_n)=0\}$.  It is a standard fact that generic sets are open, dense, and full measure.  When we talk about a generic set in $\mathbb{C}^d$ we mean that it is generic when we identify $\mathbb{C}^d$ with $\mathbb{R}^{2d}$.

\begin{theorem}\label{generic}
For a generic choice of vectors $\{\varphi_i\}_{i=1}^{d^2}\subseteq\mathbb{C}^d$ we have that $\mathrm{span}\{\varphi_i\varphi_i^*\}_{i=1}^{d^2}=\mathbb{H}_{d\times d}$.
\end{theorem}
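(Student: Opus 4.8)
The plan is to reduce the statement to the non-vanishing of a single polynomial. Since $\mathbb{H}_{d\times d}$ is a real vector space of dimension $d^2$ and we are handed exactly $d^2$ matrices $\varphi_i\varphi_i^*$ (each manifestly Hermitian), their span is all of $\mathbb{H}_{d\times d}$ precisely when these $d^2$ matrices are linearly independent over $\mathbb{R}$. Fixing once and for all an orthonormal basis $\{H_1,\dots,H_{d^2}\}$ of $\mathbb{H}_{d\times d}$, I would write each $\varphi_i\varphi_i^*$ in coordinates as the column $\big(\langle\varphi_i\varphi_i^*,H_1\rangle,\dots,\langle\varphi_i\varphi_i^*,H_{d^2}\rangle\big)^\top$ and collect these columns into a $d^2\times d^2$ real matrix $M(\varphi_1,\dots,\varphi_{d^2})$. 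Linear independence of the family $\{\varphi_i\varphi_i^*\}$ is then equivalent to $\det M\neq 0$.

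The next observation is that every entry of $\varphi_i\varphi_i^*$ is a real-quadratic expression in the real and imaginary parts of the coordinates of $\varphi_i$, so each entry of $M$, and hence $p:=\det M$, is a polynomial in the $2d^3$ real variables recording the coordinates of $\varphi_1,\dots,\varphi_{d^2}$. By the definition of a generic set, the complement of the zero set of $p$ is generic, so the theorem follows as soon as we know that $p$ is not the zero polynomial. Equivalently, it suffices to exhibit a single choice of $d^2$ vectors whose outer products form a basis of $\mathbb{H}_{d\times d}$.

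To build such a witness, let $e_1,\dots,e_d$ be the standard basis of $\mathbb{C}^d$ and take the $d^2$ vectors consisting of $e_j$ for $1\le j\le d$, together with $e_j+e_k$ and $e_j+ie_k$ for $1\le j<k\le d$. Their outer products are $e_je_j^*=E_{jj}$; then $(e_j+e_k)(e_j+e_k)^*=E_{jj}+E_{kk}+(E_{jk}+E_{kj})$; and $(e_j+ie_k)(e_j+ie_k)^*=E_{jj}+E_{kk}+i(E_{kj}-E_{jk})$, where $E_{jk}$ denotes the matrix unit. Subtracting off the already-available diagonal terms shows the span contains every $E_{jk}+E_{kj}$ and every $i(E_{jk}-E_{kj})$ with $j<k$, along with every $E_{jj}$; these are exactly the $d^2$ matrices of the standard real basis of $\mathbb{H}_{d\times d}$. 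Hence these particular outer products span $\mathbb{H}_{d\times d}$, so $p$ is nonzero at this point and is therefore not identically zero.

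The genuinely substantive step is the explicit construction in the last paragraph; everything else is the routine ``a polynomial nonzero somewhere is generically nonzero'' mechanism. I expect the only point needing care to be the bookkeeping that the listed outer products really exhaust a basis, namely matching the count $d+2\binom{d}{2}=d^2$ and confirming the real-linear independence of the diagonal, real-off-diagonal, and imaginary-off-diagonal families.
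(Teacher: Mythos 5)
Your proof is correct, and its overall skeleton is the same as the paper's: reduce the spanning condition to the non-vanishing of a single determinant polynomial in the real coordinates of the vectors, and then exhibit one point where it does not vanish. The difference is entirely in how the witness is produced. The paper argues non-constructively: take any basis of $\mathbb{H}_{d\times d}$, apply the spectral theorem to write each basis element as a combination of rank-one projections, and conclude that the rank-one matrices span, hence contain a basis. You instead write down an explicit family --- $e_j$, $e_j+e_k$, and $e_j+ie_k$ --- whose outer products visibly yield the standard real basis $\{E_{jj}\}\cup\{E_{jk}+E_{kj}\}\cup\{i(E_{jk}-E_{kj})\}$ of $\mathbb{H}_{d\times d}$, and your count $d+2\binom{d}{2}=d^2$ and the independence check are right (note $(e_j+ie_k)(e_j+ie_k)^*=E_{jj}+E_{kk}-iE_{jk}+iE_{kj}$, which is what you wrote). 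The explicit construction is arguably preferable since it gives a concrete certificate and adapts immediately to the real-symmetric case (drop the $e_j+ie_k$ vectors to get $d(d+1)/2$ of them), while the paper's spectral argument is shorter and makes clear that the phenomenon is basis-independent. A minor cosmetic difference: the paper phrases the polynomial as the determinant of the frame operator of $\{\varphi_i\varphi_i^*\}$ acting on $\mathbb{H}_{d\times d}$, whereas you use the determinant of the coordinate matrix $M$ with respect to an orthonormal basis; these carry the same information (the former is $(\det M)^2$), so nothing of substance changes.
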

\begin{proof}
First let $\{T_i\}_{i=1}^{d^2}$ be any basis for $\mathbb{H}_{d\times d}$.  Since each $T_i$ is Hermitian we can use the spectral theorem to get a decomposition $T_i=\sum_{j=1}^n\lambda_{ij}P_{ij}$ where each $P_{ij}$ is rank 1.  So it follows that $\mathrm{span}\{P_{ij}\}=\mathbb{H}_{d\times d}$ and therefore this set contains a basis of $\mathbb{H}_{d\times d}.$ Thus, we have constructed a basis of $\mathbb{H}_{d\times d}$ consisting only of rank 1 matrices.

Now observe that for a given choice of vectors $\{\varphi_i\}_{i=1}^{d^2}$ we have that $\mathrm{span}\{\varphi_i\varphi_i^*\}=\mathbb{H}_{d\times d}$ if and only if the determinant of the frame operator is nonzero (note that we are refering to the frame operator of $\{\varphi_i\varphi_i^*\}_{i=1}^{d^2}$ as an operator on $\mathbb{H}_{d\times d}$, not the frame operator of $\{\varphi_i\}_{i=1}^n$ as an operator on $\mathbb{C}^d$).  But the determinant of the frame operator is a polynomial in the (real and imaginary parts) of the entries of the $\varphi_i$'s, and by the first paragraph we know that there is at least one choice for which this does not vanish, so we can conclude that for a generic choice it does not vanish.
\end{proof}

\begin{corollary}
If $n\leq d^2$ then for a generic choice of vectors $\{\varphi_i\}_{i=1}^n\subseteq\mathbb{C}^d$ we have that $\{\varphi_i\varphi_i^*\}_{i=1}^n$ is linearly independent.
\end{corollary}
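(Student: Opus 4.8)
The plan is to reduce the statement to the non-vanishing of a single polynomial and then feed it a point supplied by Theorem \ref{generic}. Recall that in any inner product space a finite collection of vectors is linearly independent if and only if its Gram matrix is nonsingular. Here the ambient space is $\mathbb{H}_{d\times d}$ with the trace inner product, the vectors are $\varphi_i\varphi_i^*$, and the Gram matrix $G$ has entries
$$
G_{ij}=\langle\varphi_i\varphi_i^*,\varphi_j\varphi_j^*\rangle=\tr(\varphi_i\varphi_i^*\varphi_j\varphi_j^*)=|\langle\varphi_i,\varphi_j\rangle|^2 .
$$
Thus I would define $p:=\det G$, and observe that $p$ is a polynomial in the real and imaginary parts of the entries of the $\varphi_i$'s, and that $\{\varphi_i\varphi_i^*\}_{i=1}^n$ fails to be linearly independent precisely when $p=0$. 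In other words, the complement of the desired set is exactly the zero set of $p$, so it is generic provided $p$ is not identically zero.

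The only substantive point is therefore to exhibit one tuple at which $p\neq 0$, and this is where Theorem \ref{generic} does the work. That theorem gives a choice of $d^2$ vectors with $\mathrm{span}\{\varphi_i\varphi_i^*\}_{i=1}^{d^2}=\mathbb{H}_{d\times d}$; since $\dim\mathbb{H}_{d\times d}=d^2$, a spanning set of this cardinality is automatically a basis, hence linearly independent. Any subset of a linearly independent set is linearly independent, so the first $n$ of these vectors give a collection $\{\varphi_i\varphi_i^*\}_{i=1}^n$ that is linearly independent, i.e.\ a point of $(\mathbb{C}^d)^n$ at which $p\neq 0$. This shows $p\not\equiv 0$, and therefore the set on which $\{\varphi_i\varphi_i^*\}_{i=1}^n$ is linearly independent is generic in $(\mathbb{C}^d)^n$, as required.

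The main (and essentially only) obstacle is verifying that $p$ is not the zero polynomial, and I expect Theorem \ref{generic} together with the dimension count to dispatch it cleanly; the remaining ingredients are the standard Gram-determinant criterion for linear independence and the observation that restricting a linearly independent family to a subfamily preserves linear independence. One should take care that ``generic'' is being asserted in the correct ambient space, namely $(\mathbb{C}^d)^n\cong\mathbb{R}^{2dn}$, which is exactly where $p$ lives once we record the first $n$ coordinates of the point produced by the theorem.
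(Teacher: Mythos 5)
Your proof is correct and is essentially the argument the paper intends: the corollary is stated without proof as an immediate consequence of Theorem \ref{generic}, and your Gram-determinant polynomial on $(\mathbb{C}^d)^n$ plus the witness point extracted from that theorem is exactly the same ``determinant polynomial does not vanish identically, hence generically nonzero'' strategy used in the proof of Theorem \ref{generic} itself. You are also right to be careful that genericity is asserted in $(\mathbb{C}^d)^n\cong\mathbb{R}^{2dn}$ rather than in $(\mathbb{C}^d)^{d^2}$, a point the paper glosses over.
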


Given a frame $\{\varphi_i\}_{i=1}^n\subseteq\mathbb{C}^d$ define the operator $\mathcal{A}:\mathbb{R}^n\rightarrow\mathbb{H}_{d\times d}$ by
$$
\mathcal{A}w=\sum_{i=1}^nw_i\varphi_i\varphi_i^*
$$
where $w=(w_1,...,w_n)^T$.  To determine whether $\{\varphi_i\}_{i=1}^n$ is scalable boils down to finding a nonnegative solution to
$$
\mathcal{A}w=I_d.
$$

In the generic case when $\{\varphi_i\varphi_i^*\}_{i=1}^n$ is linearly independent, this system is guaranteed to have either no solution, or one unique solution.  So if it either has a solution with a negative entry or has no solution we can conclude that this frame is not scalable, and if it has a nonnegative solution then it is scalable and this solution tells us the unique scalars to use.  We summarize this in the following corollary:

\begin{corollary}
Given frame $\{\varphi_i\}_{i=1}^n\subseteq\mathbb{C}^d$ such that $\{\varphi_i\varphi_i^*\}_{i=1}^n$ is linearly independent in $\mathbb{H}_{d\times d}$, we can determine its scalability by solving the linear system
\begin{equation}\label{system}
\mathcal{A}w=I_d.
\end{equation}
Furthermore, in this case if it is scalable then it is scalable in a unique way.

In particular, if $n\leq d^2$ then with probability 1, determining the scalability of $\{\varphi_i\}_{i=1}^n$ is equivalent to solving the linear system given in \eqref{system}.
\end{corollary}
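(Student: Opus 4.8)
The plan is to reduce the decision problem to ordinary linear algebra by exploiting the injectivity that linear independence provides. First I would observe that the hypothesis that $\{\varphi_i\varphi_i^*\}_{i=1}^n$ is linearly independent in $\mathbb{H}_{d\times d}$ is precisely the statement that the real-linear map $\mathcal{A}:\mathbb{R}^n\to\mathbb{H}_{d\times d}$ is injective: indeed $\mathcal{A}w=\sum_i w_i\varphi_i\varphi_i^*=0$ forces every $w_i=0$ exactly when the outer products are independent, so $\ker\mathcal{A}=\{0\}$. Combined with the reduction recorded just above the corollary — that the frame is scalable if and only if $\mathcal{A}w=I_d$ admits a nonnegative solution — this is all the structure I need.

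Next I would carry out the decision procedure and the uniqueness claim together. Because $\mathcal{A}$ is injective, the system $\mathcal{A}w=I_d$ has at most one solution in $\mathbb{R}^n$. Three mutually exclusive cases arise when we solve it: if there is no solution at all, then a fortiori there is no nonnegative solution and the frame is not scalable; if the unique solution $w^*$ has a negative entry, then since it is the only solution there is again no nonnegative solution and the frame is not scalable; and if the unique solution $w^*$ lies in $\mathbb{R}_+^n$, then $w^*$ is a valid scaling, so the frame is scalable, and it is the only scaling. Thus solving the single linear system \eqref{system} and inspecting the sign of its solution decides scalability and, in the scalable case, exhibits the unique scaling.

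Finally I would establish the probabilistic statement. When $n\leq d^2$ the corollary preceding this one guarantees that $\{\varphi_i\varphi_i^*\}_{i=1}^n$ is linearly independent for a generic choice of $\{\varphi_i\}_{i=1}^n$; since generic sets were noted to be of full measure, this linear independence fails only on a set of measure zero and therefore holds with probability $1$. On that full-measure event the first part of the corollary applies verbatim, so determining scalability reduces to solving \eqref{system}, as claimed.

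I do not expect a serious obstacle here: the content is entirely the translation of linear independence into injectivity and the resulting at-most-one-solution dichotomy. The only point requiring a little care is the bookkeeping about the boundary of the nonnegative orthant — whether the relevant notion of scaling permits zero entries — but since scalability was reduced to the existence of a \emph{nonnegative} (rather than strictly positive) solution, the case analysis above handles this cleanly.
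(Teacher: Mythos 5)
Your proposal is correct and matches the paper's reasoning exactly: the paper's justification is the paragraph preceding the corollary, which likewise notes that linear independence of the outer products makes the system have at most one solution, so one simply solves it and inspects the sign, with the generic case handled by the preceding corollary. No gaps here.
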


\section{Linearly dependent outer products}

In this section we will address the situation when $\{\varphi_i\varphi_i^*\}_{i=1}^n$ is linearly dependent.  The main problem here is that the system $\mathcal{A}w=I_d$ may have many solutions, and possibly none of them are nonnegative.  In this section we will find it convenient to assume that $\|\varphi_i\|=1$ for every $i=1,...,n$, note that we lose no generality by making this assumption.

Given a collection of vectors $\{x_i\}_{i=1}^n\subseteq\mathbb{R}^d$ we define their \textit{affine span} as
$$
\mathrm{aff}\{x_i\}_{i=1}^n:=\{\sum_{i=1}^nc_ix_i:\sum_{i=1}^nc_i=1\}
$$
and we say that $\{x_i\}_{i=1}^n$ is \textit{affinely independent} if
$$
x_j\not\in\mathrm{aff}\{x_i\}_{i\neq j}
$$
for every $j=1,...,n$.  We also define their \textit{convex hull} as
$$
\mathrm{conv}\{x_i\}_{i=1}^n:=\{\sum_{i=1}^nc_ix_i:c_i\geq 0,\sum_{i=1}^nc_i=1\}.
$$
We say a set $\mathcal{P}\subseteq\mathbb{R}^d$ is called a \textit{polytope} if it is the convex hull of finitely many points.

\begin{proposition}
Given a collection of unit norm vectors $\{\varphi_i\}_{i=1}^n\subseteq\mathbb{C}^d$ we have that $\{\varphi_i\varphi_i^*\}_{i=1}^n$ is linearly independent if and only if it is affinely independent.
\end{proposition}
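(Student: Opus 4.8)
The plan is to reduce both notions to statements about linear relations among the matrices $M_i := \varphi_i\varphi_i^*$ and then to exploit the single extra piece of structure supplied by the unit-norm hypothesis, namely that $\tr(M_i) = \|\varphi_i\|^2 = 1$ for every $i$. First I would record the standard reformulation of affine (in)dependence: a finite set $\{M_i\}_{i=1}^n$ in a real vector space is affinely dependent if and only if there exist scalars $c_1,\dots,c_n$, not all zero, with $\sum_i c_i M_i = 0$ and $\sum_i c_i = 0$. This follows directly from the definition given above: to say $M_j \in \mathrm{aff}\{M_i\}_{i\neq j}$ means $M_j = \sum_{i\neq j} c_i M_i$ with $\sum_{i\neq j} c_i = 1$, and setting $c_j = -1$ and moving $M_j$ across produces exactly such a relation; conversely, any relation of this form with some $c_j\neq 0$ can be solved for $M_j$ to exhibit it in the affine span of the others. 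Thus affine independence is precisely the statement that the combined homogeneous system $\sum_i c_i M_i = 0$, $\sum_i c_i = 0$ has only the trivial solution.

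The forward implication, linear independence $\Rightarrow$ affine independence, needs no hypothesis at all and I would dispose of it in one line: any nontrivial affine dependence relation is in particular a nontrivial linear dependence relation, so the absence of the latter forces the absence of the former.

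The substance is the converse, and this is where the unit-norm assumption enters. Suppose $\{M_i\}$ is affinely independent and, for contradiction, that it is linearly dependent, so there are scalars $c_1,\dots,c_n$, not all zero, with $\sum_i c_i M_i = 0$. Applying the trace, which is a real-linear functional on $\mathbb{H}_{d\times d}$, gives $0 = \tr\!\big(\sum_i c_i M_i\big) = \sum_i c_i\,\tr(M_i) = \sum_i c_i$, where the last equality uses $\tr(M_i) = 1$. Hence the same nontrivial scalars satisfy both $\sum_i c_i M_i = 0$ and $\sum_i c_i = 0$, witnessing affine dependence and contradicting the hypothesis. Therefore affine independence forces linear independence.

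The real content is the single observation that every $M_i$ lies on the affine hyperplane $\{T \in \mathbb{H}_{d\times d} : \tr(T) = 1\}$, so the constraint $\sum_i c_i = 0$ that alone distinguishes affine from linear relations is obtained for free by taking traces; I do not expect any genuine obstacle beyond stating the affine-independence characterization correctly. I would remark in passing that unit norm is not essential—any common nonzero value of $\|\varphi_i\|^2$ would serve equally well—but since this section has already reduced to the unit-norm case, that is the cleanest setting in which to phrase the result.
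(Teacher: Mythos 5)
Your proof is correct and is essentially the paper's argument: both hinge on the observation that $\tr(\varphi_i\varphi_i^*)=\langle\varphi_i\varphi_i^*,I_d\rangle=1$ for unit vectors, so applying the trace to a linear dependence forces the coefficients to sum appropriately. The only cosmetic difference is that you pass through the homogeneous characterization of affine dependence ($\sum_i c_iM_i=0$, $\sum_i c_i=0$) while the paper solves directly for one $\varphi_j\varphi_j^*$ and checks that the resulting coefficients sum to $1$.
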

\begin{proof}
Clearly linear independence always implies affine independence.  So suppose that $\{\varphi_i\varphi_i^*\}_{i=1}^n$ is not linearly independent.  Then we have an equation of the form
$$
\varphi_j\varphi_j^*=\sum_{i\neq j}c_i\varphi_i\varphi_i^*
$$
for some $j$.  Also note that since $\|\varphi_i\|=1$ it follows that $\langle\varphi_i\varphi_i^*,I_d \rangle=1$ for every $i=1,...,n$.  Therefore, we have
\begin{eqnarray*}
1&=&\langle\varphi_j\varphi_j^*,I_d \rangle =\langle\sum_{i\neq j}c_i\varphi_i\varphi_i^*,I_d \rangle \\
&=&\sum_{i\neq j}c_i\langle\varphi_i\varphi_i^*,I_d \rangle =\sum_{i\neq j}c_i.
\end{eqnarray*}
Therefore $\{\varphi_i\varphi_i^*\}_{i=1}^n$ is not affinity independent.
\end{proof}

\begin{proposition}\label{b}
A unit norm frame $\{\varphi_i\}_{i=1}^n\subseteq\mathbb{C}^d$ is scalable if and only if $\frac{1}{d}I_d\in\mathrm{conv}\{\varphi_i\varphi_i^*\}_{i=1}^n$.   Furthermore, if $\lambda I_d\in\mathrm{conv}\{\varphi_i\varphi_i^*\}_{i=1}^n$ then $\lambda=\frac{1}{d}$ and if $\sum_{i=1}^nw_i\varphi_i\varphi_i^*=\frac{1}{d}I_d$ then $\sum_{i=1}^nw_i=1$.
\end{proposition}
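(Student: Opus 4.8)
The plan is to derive everything from the single observation already exploited in the previous proof: since each $\varphi_i$ is a unit vector, $\langle\varphi_i\varphi_i^*,I_d\rangle=\mathrm{Trace}(\varphi_i\varphi_i^*)=\|\varphi_i\|^2=1$. Consequently, pairing any combination $\sum_i c_i\varphi_i\varphi_i^*$ with $I_d$ collapses to $\sum_i c_i$, and this is exactly the bridge between the convex-hull normalization $\sum_i c_i=1$ and the scaling equation $\sum_i w_i\varphi_i\varphi_i^*=I_d$, where the trace of $I_d$ is $d$ rather than $1$.

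First I would dispose of the two ``furthermore'' assertions, as each is a one-line computation. If $\sum_i w_i\varphi_i\varphi_i^*=\frac1d I_d$, then pairing both sides with $I_d$ and using $\langle\varphi_i\varphi_i^*,I_d\rangle=1$ on the left gives $\sum_i w_i=\langle\frac1d I_d,I_d\rangle=\frac1d\,\mathrm{Trace}(I_d)=1$, which is the final claim. For the penultimate claim, if $\lambda I_d\in\mathrm{conv}\{\varphi_i\varphi_i^*\}$ write $\lambda I_d=\sum_i c_i\varphi_i\varphi_i^*$ with $c_i\ge 0$ and $\sum_i c_i=1$; pairing with $I_d$ yields $\lambda d=\langle\lambda I_d,I_d\rangle=\sum_i c_i=1$, so $\lambda=\frac1d$.

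For the main equivalence I would move between the two representations by a global factor of $d$. If the frame is scalable there are weights $w_i\ge 0$ with $\sum_i w_i\varphi_i\varphi_i^*=I_d$; pairing with $I_d$ gives $\sum_i w_i=\mathrm{Trace}(I_d)=d$, so $c_i=w_i/d$ are nonnegative, sum to $1$, and satisfy $\sum_i c_i\varphi_i\varphi_i^*=\frac1d I_d$, placing $\frac1d I_d$ in $\mathrm{conv}\{\varphi_i\varphi_i^*\}$. Conversely, from a convex representation $\frac1d I_d=\sum_i c_i\varphi_i\varphi_i^*$ the weights $w_i=d c_i$ give $\sum_i w_i\varphi_i\varphi_i^*=I_d$, exhibiting a scaling.

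The only point that needs care, rather than any real difficulty, is the mismatch between the nonnegative coefficients $c_i\ge 0$ furnished by the convex hull and the strictly positive weights appearing in the definition of scalability: a vanishing $c_i$ simply annihilates the $i$-th vector. I would note that this is harmless, since the equation $\sum_i w_i\varphi_i\varphi_i^*=I_d$ forces the frame operator of $\{\sqrt{w_i}\varphi_i\}$ to equal $I_d$, hence the scaled system is a Parseval frame whether or not some of the $v_i$ vanish; thus allowing $w_i\ge 0$ is equivalent to the stated definition, and the factor-$d$ bookkeeping is the main thing to get right.
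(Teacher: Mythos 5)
Your proposal is correct and follows essentially the same route as the paper's own proof: both arguments hinge on the identity $\langle\varphi_i\varphi_i^*,I_d\rangle=\|\varphi_i\|^2=1$ and the resulting pairing with $I_d$ to convert between $\sum_i w_i = d$ and the convex-hull normalization $\sum_i c_i=1$. Your remark about zero coefficients being harmless is a small point the paper leaves implicit, but it does not change the substance of the argument.
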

\begin{proof}
Suppose we have a scaling $w$ so that
$$
I_d=\sum_{i=1}^nw_i\varphi_i\varphi_i^*.
$$
Then
\begin{eqnarray*}
d&=&\langle I_d,I_d \rangle=\langle\sum_{i=1}^nw_i\varphi_i\varphi_i^*,I_d \rangle \\
&=&\sum_{i=1}^nw_i\langle\varphi_i\varphi_i^*,I_d \rangle=\sum_{i=1}^nw_i.
\end{eqnarray*}
Thus, $\sum_{i=1}^n\frac{w_i}{d}=1$ and since $w_i\geq 0$ for every $i=1,...,n$ it follows that $\frac{1}{d}I_d=\sum_{i=1}^n\frac{w_i}{d}\varphi_i\varphi_i^*\in\mathrm{conv}\{\varphi_i\varphi_i^*\}_{i=1}^n$.  The converse is obvious.

The furthermore part follows from a similar argument.  Suppose $\lambda I_d=\sum_{i=1}^nw_i\varphi_i\varphi_i^*$ with $\sum_{i=1}^nw_i=1$.  Then
$$
d\lambda=\langle\lambda I_d,I_d\rangle=\sum_{i=1}^nw_i=1.
$$
Now suppose $\frac{1}{d}I_d=\sum_{i=1}^nw_i\varphi_i\varphi_i^*$.  Then
$$
1=\langle\sum_{i=1}^nw_i\varphi_i\varphi_i^*,I_d \rangle=\sum_{i=1}^nw_i.
$$
\end{proof}

The following theorem is known as Carath\'{e}odory's theorem:

\begin{theorem}\label{thm_car}
Given a set of points $\{x_i\}_{i=1}^n\subseteq\mathbb{R}^d$ suppose $y\in\mathrm{conv}\{x_i\}_{i=1}^n$.  Then there exists a subset $I\subseteq\{1,...,n\}$ such that $y\in\mathrm{conv}\{x_i\}_{i\in I}$ and $\{x_i\}_{i\in I}$ is affinely independent.
\end{theorem}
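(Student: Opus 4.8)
The plan is to reduce an arbitrary convex representation of $y$ to one supported on an affinely independent subset, via a standard minimality argument. First I would use the hypothesis $y\in\mathrm{conv}\{x_i\}_{i=1}^n$ to write $y=\sum_{i\in I}c_ix_i$ with $c_i>0$, $\sum_{i\in I}c_i=1$, and $I\subseteq\{1,\dots,n\}$, and among all such representations I would pick one for which $|I|$ is as small as possible. The claim is then that for this minimal choice the set $\{x_i\}_{i\in I}$ is automatically affinely independent, which is exactly what the theorem asks for.

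To prove the claim I would argue by contradiction, so suppose $\{x_i\}_{i\in I}$ is affinely dependent. I would first translate the definition of affine dependence given in the paper (namely $x_j\in\mathrm{aff}\{x_i\}_{i\in I\setminus\{j\}}$ for some $j$) into the symmetric algebraic form: there exist scalars $\{d_i\}_{i\in I}$, not all zero, with $\sum_{i\in I}d_i=0$ and $\sum_{i\in I}d_ix_i=0$. These two formulations are equivalent, since solving the membership relation for $x_j$ and moving everything to one side produces such a $d$, and conversely dividing a nontrivial relation by a nonzero coefficient $d_j$ exhibits $x_j$ as an affine combination of the remaining points.

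The heart of the argument is then a one-parameter perturbation of the coefficients. For any $t\in\mathbb{R}$ we have $y=\sum_{i\in I}(c_i-td_i)x_i$ with $\sum_{i\in I}(c_i-td_i)=1$, because the $d_i$ contribute nothing to the value or to the coefficient sum. Since $\sum_{i\in I}d_i=0$ and the $d_i$ are not all zero, at least one $d_i$ is strictly positive, so I would set $t=\min\{c_i/d_i:d_i>0\}$, attained at some index $i_0$. Then $t>0$, every coefficient $c_i-td_i$ remains nonnegative (for $d_i>0$ by the choice of $t$, and for $d_i\le 0$ because $-td_i\ge 0$), and the coefficient at $i_0$ vanishes. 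This yields a convex representation of $y$ supported on $I\setminus\{i_0\}$, contradicting the minimality of $|I|$ and proving the claim.

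The step I expect to require the most care is the choice of $t$ and the verification that it produces a genuine convex representation, i.e. keeping all coefficients nonnegative while forcing at least one to zero. The other mildly delicate point is the equivalence between the paper's ``$x_j\notin\mathrm{aff}$'' definition and the symmetric relation $\sum_{i\in I}d_i=0$, $\sum_{i\in I}d_ix_i=0$; once that is in hand the perturbation argument is routine. If one prefers an iterative formulation over the minimality argument, the same reduction can be applied repeatedly, and it terminates because $|I|$ strictly decreases at each step, so the process must stop at an affinely independent support.
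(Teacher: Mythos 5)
Your proof is correct, and it is the classical proof of Carath\'{e}odory's theorem: take a representation $y=\sum_{i\in I}c_ix_i$ with minimal support, convert an affine dependence into a relation $\sum_{i\in I}d_ix_i=0$, $\sum_{i\in I}d_i=0$ with some $d_i>0$, and perturb by $t=\min\{c_i/d_i:d_i>0\}$ to kill a coefficient while preserving nonnegativity and the coefficient sum. Note that the paper itself offers no proof of this statement --- it records the result as a known theorem (citing it only implicitly as standard) --- so there is nothing in the source to compare against; your argument fills that gap correctly, including the slightly fussy equivalence between the paper's definition of affine independence (no $x_j$ lies in $\mathrm{aff}\{x_i\}_{i\neq j}$) and the symmetric formulation in terms of a vanishing-sum linear relation.
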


\begin{corollary}\label{a}
Suppose $\{\varphi_i\}_{i=1}^n\subseteq\mathbb{C}^d$ is a scalable frame.  Then there is a subset $\{\varphi_i\}_{i\in I}$ which is also scalable and $\{\varphi_i\varphi_i^*\}_{i\in I}$ is linearly independent.
\end{corollary}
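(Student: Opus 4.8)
The plan is to deduce this directly from Carath\'{e}odory's theorem (Theorem \ref{thm_car}), Proposition \ref{b}, and the preceding proposition identifying affine independence with linear independence for unit norm outer products. First I would reduce to the unit norm case. Scalability is unaffected by rescaling each $\varphi_i$: replacing $\varphi_i$ by $\varphi_i/\|\varphi_i\|$ merely absorbs the factor $\|\varphi_i\|^2$ into the scaling weights, so the normalized family is scalable precisely when the original one is. Moreover $\varphi_i\varphi_i^*$ and $(\varphi_i/\|\varphi_i\|)(\varphi_i/\|\varphi_i\|)^*$ differ by the positive scalar $\|\varphi_i\|^2$, so linear independence of $\{\varphi_i\varphi_i^*\}_{i\in I}$ is also unchanged. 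Thus we may assume $\|\varphi_i\|=1$ for all $i$.

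Next, since $\{\varphi_i\}_{i=1}^n$ is a scalable unit norm frame, Proposition \ref{b} gives $\frac{1}{d}I_d\in\mathrm{conv}\{\varphi_i\varphi_i^*\}_{i=1}^n$. Viewing $\mathbb{H}_{d\times d}$ as $\mathbb{R}^{d^2}$, I then apply Carath\'{e}odory's theorem with $y=\frac{1}{d}I_d$ and the points $x_i=\varphi_i\varphi_i^*$ to extract a subset $I\subseteq\{1,\dots,n\}$ with $\frac{1}{d}I_d\in\mathrm{conv}\{\varphi_i\varphi_i^*\}_{i\in I}$ and $\{\varphi_i\varphi_i^*\}_{i\in I}$ affinely independent.

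Finally I read off the two conclusions. From $\frac{1}{d}I_d\in\mathrm{conv}\{\varphi_i\varphi_i^*\}_{i\in I}$ there are weights $w_i\ge 0$ summing to $1$ with $\sum_{i\in I}w_i\varphi_i\varphi_i^*=\frac{1}{d}I_d$, hence $\sum_{i\in I}(dw_i)\varphi_i\varphi_i^*=I_d$ and $\{\sqrt{dw_i}\,\varphi_i\}_{i\in I}$ is a Parseval frame; in particular $\{\varphi_i\}_{i\in I}$ spans $\mathbb{C}^d$, so it is genuinely a frame and is scalable. The affine independence of $\{\varphi_i\varphi_i^*\}_{i\in I}$ then upgrades to linear independence by the preceding proposition. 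There is no real obstacle here, since the statement is essentially a repackaging of Carath\'{e}odory's theorem; the one point that must not be overlooked is that Carath\'{e}odory delivers only \emph{affine} independence, and it is precisely the unit norm reduction that makes the preceding proposition available to convert this into the desired \emph{linear} independence.
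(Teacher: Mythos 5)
Your proof is correct and is precisely the argument the paper intends: the corollary is stated without proof immediately after Carath\'{e}odory's theorem, and the intended derivation is exactly your chain of normalizing, invoking Proposition \ref{b} to place $\frac{1}{d}I_d$ in the convex hull, applying Carath\'{e}odory, and converting affine to linear independence via the preceding proposition. Your explicit attention to the unit norm reduction and to the affine-versus-linear distinction fills in the details the paper leaves implicit.
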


Given a unit norm frame $\{\varphi_i\}_{i=1}^n\subseteq\mathbb{C}^d$ we define the set
$$
\mathcal{P}(\{\varphi_i\}_{i=1}^n):=\{(w_1,...,w_n):w_i\geq 0,\sum_{i=1}^nw_i\varphi_i\varphi_i^*=\frac{1}{d}I_d\}.
$$
Proposition \ref{b} tells us two things about this set: first we have that $w\in\mathcal{P}(\{\varphi_i\}_{i=1}^n)$ if and only if $d\cdot w$ is a scaling of $\{\varphi_i\}_{i=1}^n$, and second, that $\mathcal{P}(\{\varphi_i\}_{i=1}^n)$ is a (possibly empty) polytope (see, for example, Theorem 1.1 in \cite{polytopes}).

Suppose $\{\varphi_i\}_{i=1}^n\subseteq\mathbb{C}^d$ is a scalable frame, and we are given a scaling $w=(w_1,...,w_n)$.  We say the scaling is \textit{minimal} if $\{\varphi_i:w_i>0\}$ has no proper subset which is scalable.

\begin{theorem}\label{main}
Suppose $\{\varphi_i\}_{i=1}^n\subseteq\mathbb{C}^d$ is a scalable, unit norm frame.  If $w=(w_1,...,w_n)$ is a minmal scaling then $\{\varphi_i\varphi_i^*:w_i>0\}$ is linearly independent.  Furthermore, $\mathcal{P}(\{\varphi_i\}_{i=1}^n)$ is the convex hull of the minimal scalings, \textit{i.e.}, every scaling is a convex combination of minimal scalings.
\end{theorem}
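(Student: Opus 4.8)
The plan is to establish the two assertions separately, both via the same device: when the outer products indexed by the support of a scaling are linearly dependent, we may move along a kernel direction of $\mathcal{A}$ without leaving the solution set of $\mathcal{A}w=\tfrac1d I_d$.

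For the first assertion, let $w$ be a minimal scaling, put $J=\{i:w_i>0\}$, and suppose for contradiction that $\{\varphi_i\varphi_i^*\}_{i\in J}$ is linearly dependent. Then there is a nonzero $c=(c_i)$ supported on $J$ with $\sum_{i\in J}c_i\varphi_i\varphi_i^*=0$; pairing with $I_d$ and using $\|\varphi_i\|=1$ gives $\sum_{i\in J}c_i=0$, so $c$ has a strictly negative entry. Since $\mathcal{A}(w+tc)=\mathcal{A}w=I_d$ for all $t$, the vector $w+tc$ is a scaling whenever its entries stay nonnegative. Letting $t_0>0$ be the smallest $t$ at which some coordinate $w_i+tc_i$ (necessarily with $c_i<0$) hits $0$, the vector $w+t_0c$ is a scaling supported on a proper subset of $J$, contradicting minimality. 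Hence $\{\varphi_i\varphi_i^*\}_{i\in J}$ is linearly independent.

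For the second assertion I would pass to the polytope $\mathcal{P}=\mathcal{P}(\{\varphi_i\}_{i=1}^n)$, using Proposition \ref{b} to identify a scaling $w$ with the point $w/d\in\mathcal{P}$; this bijection preserves supports, hence both minimality and the linear (in)dependence of the associated outer products. The central step is to characterize the vertices (extreme points) of $\mathcal{P}$ as exactly those $u\in\mathcal{P}$ for which $\{\varphi_i\varphi_i^*:u_i>0\}$ is linearly independent. For one direction, linear dependence supplies a nonzero $c\in\ker\mathcal{A}$ supported on $\{i:u_i>0\}$, whence $u=\tfrac12\bigl((u+\varepsilon c)+(u-\varepsilon c)\bigr)$ with $u\pm\varepsilon c\in\mathcal{P}$ for small $\varepsilon>0$, so $u$ is not a vertex. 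For the other, if $\{\varphi_i\varphi_i^*:u_i>0\}$ is independent and $u=\tfrac12(u_1+u_2)$ with $u_1,u_2\in\mathcal{P}$, then nonnegativity forces $u_1,u_2$ to be supported on $\{i:u_i>0\}$, and $\mathcal{A}(u_1-u_2)=0$ then forces $u_1=u_2$, so $u$ is a vertex.

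It remains to match vertices with minimal scalings. The first assertion already gives minimal $\Rightarrow$ linearly independent support; for the converse, if the support of $u$ is independent yet some proper subset of $\{\varphi_i:u_i>0\}$ were scalable, subtracting the corresponding point of $\mathcal{P}$ from $u$ would yield a nonzero element of $\ker\mathcal{A}$ supported on $\{i:u_i>0\}$, contradicting independence. Thus ``vertex,'' ``linearly independent support,'' and ``minimal'' all coincide, and since $\mathcal{P}$ is a polytope it equals the convex hull of its vertices, that is, of the minimal scalings. I expect the main obstacle to be the vertex characterization in the third paragraph---specifically the support bookkeeping showing that a convex decomposition of $u$ cannot use coordinates outside $\mathrm{supp}(u)$---together with the observation that $\mathcal{P}$ is bounded (it lies in the simplex $\sum_i u_i=1$), which is what legitimizes representing it as the convex hull of its vertices.
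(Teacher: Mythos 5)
Your argument is correct, and it reaches the theorem by a somewhat different route than the paper's. For the first assertion the paper simply invokes Carath\'{e}odory's theorem via Corollary \ref{a}: the support of a minimal scaling is a scalable set, Carath\'{e}odory extracts a scalable subset whose outer products are affinely (hence, by the unit-norm proposition, linearly) independent, and minimality forces that subset to be everything. Your kernel-perturbation argument --- pick $c\in\ker\mathcal{A}$ supported on the support of $w$, note $\sum_i c_i=0$ forces a negative entry, and slide to $w+t_0c$ until a coordinate vanishes --- is exactly the standard proof of Carath\'{e}odory inlined, so it is sound but re-derives a result the paper cites. For the second assertion the paper proves vertex $\Leftrightarrow$ minimal directly: ``vertex $\Rightarrow$ minimal'' by extending the segment from a smaller-support point $v$ through $u$ to a point $w(t_0)\in\mathcal{P}$ with $t_0>1$, and ``minimal $\Rightarrow$ vertex'' by the same support-containment observation you use. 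You instead interpose the characterization that $u$ is a vertex of $\mathcal{P}$ if and only if $\{\varphi_i\varphi_i^*:u_i>0\}$ is linearly independent --- the basic-feasible-solution description of the vertices of this polytope. Your route is slightly longer but yields a sharper statement than the theorem as written: for points of $\mathcal{P}$ the three conditions \emph{vertex}, \emph{minimal}, and \emph{linearly independent supported outer products} all coincide (the paper asserts only that minimal implies independent), and that vertex characterization is precisely what the vertex-enumeration algorithms cited at the end of the section exploit. Your closing observations --- that a convex decomposition of $u$ cannot leave $\mathrm{supp}(u)$ because of nonnegativity, and that $\mathcal{P}$ is bounded since it sits in the simplex $\sum_i u_i=1$ --- are exactly the points needed to legitimize writing $\mathcal{P}$ as the convex hull of its vertices, and both are handled correctly.
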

\begin{proof}
The first statement follows directly from Corollary \ref{a}. 

We now show that every vertex of $\mathcal{P}(\{\varphi_i\}_{i=1}^n)$ is indeed a minimal scaling.  Let $u\in\mathcal{P}(\{\varphi_i\}_{i=1}^n)$ be a vertex and assume to the contrary that $u$ is not minimal,   then there exists a $v\in P$ such that $\mathrm{supp}(v)\subsetneq\mathrm{supp}(u)$. Let $w(t)=v+t(u-v)$, and $t_0=\mathrm{min}\{\frac{v_i}{v_i-u_i}:v_i>u_i\}$.  We observe that  $t_0>1$ and $w(t_0)_i\geq0$ since $\rm{supp}(v)\subsetneq\rm{supp}(u)$. This means $w(t_0)\in P$, and $u$ lies on the line segment connecting $v$ and $w(t_0)$ which contradicts the fact that $u$ is a vertex.

Finally we show that every minimal scaling is a vertex of $\mathcal{P}(\{\varphi_i\}_{i=1}^n)$.  Suppose we are given a minimal scaling $w$ which is not a vertex of $\mathcal{P}(\{\varphi_i\}_{i=1}^n)$.  Then we can write $w$ as a convex combination of vertices, say $w=\sum t_iv_i$, where we know at least two $t_i$'s are nonzero, without loss of generality say $t_1$ and $t_2$.  Since both $t_1$ and $t_2$ are positive and all the entries of $v_1$ and $v_2$ are nonnegative, it follows that $\mathrm{supp}(v_1)\cup\mathrm{supp}(v_2)\subseteq\mathrm{supp}(w)$, which contradicts the fact the $w$ is a minimal scaling.
\end{proof}

Theorem \ref{main} reduces the problem of understanding the scalings of the frame $\{\varphi_i\}_{i=1}^n$ to that of finding the vertices of the polytope $\mathcal{P}(\{\varphi_i\}_{i=1}^n)$.  Relatvely fast algorithms for doing this are known, see \cite{algorithm}.

\section{When are outer products linearly independent?}

Since most of the results in this paper deal with linear independence of the outer products of subsets of our frame vectors we will address this issue in this section.  It would be nice if there were conditions on a frame  $\{\varphi_i\}_{i=1}^n$ which could guarantee that the set of outer products $\{\varphi_i\varphi_i^*\}_{i=1}^n$ is linearly independent, or conversely if knowing that $\{\varphi_i\varphi_i^*\}_{i=1}^n$ is linearly independent tells anything about the frame $\{\varphi_i\}_{i=1}^n$.  One obvious condition is that in order for $\{\varphi_i\varphi_i^*\}_{i=1}^n$ to be linearly independent we must have $n\leq d^2$, and when this is satisfied Theorem \ref{generic} tells us that this will usually be the case.

Another condition which is easy to prove is that if $\{\varphi_i\}_{i=1}^n$ is linearly independent then so is $\{\varphi_i\varphi_i^*\}_{i=1}^n$.  The converse of this is certainly not true, and since we are usually interested in frames for which $n>d$ this condition is not very useful.  The main idea here is that while the frame vectors live in a $d$-dimensional space the outer products live in a $d^2$-dimensional space, so there is much more ``room" for them to be linearly independent.

Given a frame $\{\varphi_i\}_{i=1}^n$ we define its \textit{spark} to be the size of its smallest linearly dependent subset, more precisely
$$
\mathrm{spark}(\{\varphi_i\}_{i=1}^n):=\mathrm{min}\{|I|:\{\varphi_i\}_{i\in I}\text{ is linearly dependent}\}.
$$
Clearly for a frame $\{\varphi_i\}_{i=1}^n\subseteq\mathbb{C}^d$ we must have that $\mathrm{spark}(\{\varphi_i\}_{i=1}^n)\leq d+1$, if its spark is equal to $d+1$ we say it is \textit{full spark}.  For more background on full spark frames see \cite{fullspark}.

\begin{proposition}\label{spark}
Suppose $\{\varphi_i\}_{i=1}^n\subseteq\mathbb{C}^d$ is a frame with $n\leq 2d-1$.  If $\{\varphi_i\}_{i=1}^n$ is full spark then $\{\varphi_i\varphi_i^*\}_{i=1}^n$ is linearly independent.
\end{proposition}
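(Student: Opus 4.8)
The plan is to argue by contradiction. Suppose $\{\varphi_i\varphi_i^*\}_{i=1}^n$ is linearly dependent. Since $\mathbb{H}_{d\times d}$ is a \emph{real} vector space, there are real scalars $c_1,\dots,c_n$, not all zero, with $\sum_{i=1}^n c_i\varphi_i\varphi_i^*=0$. I would separate the indices by the sign of $c_i$: let $P=\{i:c_i>0\}$ and $N=\{i:c_i<0\}$, and set
$$
M=\sum_{i\in P}c_i\varphi_i\varphi_i^*=\sum_{i\in N}(-c_i)\varphi_i\varphi_i^*.
$$
Both expressions are positive semidefinite, and a first easy step is to check that $P$ and $N$ are both nonempty and that $M\neq 0$: if either index set were empty, then $M$ would be a sum of positive multiples of nonzero rank-one terms equal to $0$, which forces all the relevant $c_i$ to vanish (evaluate the quadratic form at each $\varphi_i$), contradicting nontriviality.

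The crucial observation is that for a positively weighted sum of rank-one terms, the range equals the span of the summands' vectors. Concretely, $\ker M=\{x:\sum_{i\in P}c_i|\langle x,\varphi_i\rangle|^2=0\}=(\mathrm{span}\{\varphi_i\}_{i\in P})^\perp$, so $\mathrm{range}(M)=\mathrm{span}\{\varphi_i\}_{i\in P}$, and the identical computation on the $N$-side gives $\mathrm{range}(M)=\mathrm{span}\{\varphi_i\}_{i\in N}$. Writing $r=\mathrm{rank}(M)\geq 1$, I thus obtain two \emph{disjoint} index sets $P,N$ whose frame vectors each span the \emph{same} $r$-dimensional subspace $\mathrm{range}(M)$.

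Now I would run a counting argument against the full spark hypothesis. Since $\mathrm{span}\{\varphi_i\}_{i\in P}$ has dimension $r$, we need $|P|\geq r$, and likewise $|N|\geq r$, so the combined family $\{\varphi_i\}_{i\in P\cup N}$ consists of $|P\cup N|=|P|+|N|\geq 2r>r$ vectors, all lying in the $r$-dimensional space $\mathrm{range}(M)$; in particular it is linearly dependent. Full spark forces every linearly dependent subset to have size at least $d+1$, so $|P\cup N|\geq d+1>d$. I can then pick $d$ of these vectors, which are linearly independent by full spark and hence span a $d$-dimensional subspace inside $\mathrm{range}(M)$; this gives $r\geq d$, so $r=d$ and $\mathrm{range}(M)=\mathbb{C}^d$. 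But then $\mathrm{span}\{\varphi_i\}_{i\in P}=\mathrm{span}\{\varphi_i\}_{i\in N}=\mathbb{C}^d$, forcing $|P|\geq d$ and $|N|\geq d$, whence $n\geq|P|+|N|\geq 2d>2d-1$, contradicting $n\leq 2d-1$.

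I expect the main obstacle to be the middle step: recognizing that the sign split produces two \emph{different} subsets of the frame that span one and the same subspace $\mathrm{range}(M)$, which is exactly what lets the full spark condition be exploited to over-count the dimension. The sign-splitting together with the identification of $\mathrm{range}(M)$ with these two spans is the conceptual heart; once it is in place, the dimension bookkeeping and the final appeal to $n\leq 2d-1$ are routine.
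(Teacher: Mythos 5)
Your proof is correct and follows essentially the same route as the paper's: split the linear dependence by sign into two disjoint positively weighted sums, identify the range of each sum with the span of the corresponding frame vectors, and then count dimensions against full spark and the bound $n\leq 2d-1$. The only difference is cosmetic — the paper concludes by noting one of the two index sets has size at most $d-1$ and contradicts full spark directly, while you first force the common span to be all of $\mathbb{C}^d$ and then contradict $n\leq 2d-1$; both are the same bookkeeping.
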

\begin{proof}
Suppose by way of contradiction that $\{\varphi_i\}_{i=1}^n$ is full spark but $\{\varphi_i\varphi_i^*\}_{i=1}^n$ is linearly dependent.  Then we can write an equation of the form
\begin{equation*}\label{t}
\sum_{i\in I}a_i\varphi_i\varphi_i^*=\sum_{j\in J}b_j\varphi_j\varphi_j^*
\end{equation*}
with $a_i>0$ for every $i\in I$, $b_j>0$ for every $j\in J$, and $I\cap J=\emptyset$.  This implies that
\begin{eqnarray*}
\mathrm{span}(\{\varphi_i\}_{i\in I})&=&\mathrm{Im}(\sum_{i\in I}a_i\varphi_i\varphi_i^*) \\
&=&\mathrm{Im}(\sum_{j\in J}b_j\varphi_j\varphi_j^*)=\mathrm{span}(\{\varphi_j\}_{j\in J}).
\end{eqnarray*}
But since $n\leq 2d-1$ we have either $|I|\leq d-1$ or $|J|\leq d-1$, so this contradicts the fact the $\{\varphi_i\}_{i=1}^n$ is full spark.
\end{proof}

We first remark that the converse of Proposition \ref{spark} is not true:

\begin{example}\label{ex1}
Let $\{e_1,e_2,e_3\}$ be an orthonormal basis for $\mathbb{C}^3$ and consider the frame $\{e_1,e_2,e_3,e_1+e_2,e_2+e_3\}$.  Clearly this frame is not full spark and yet it is easy to verify that $\{e_1e_1^*,e_2e_2^*,e_3e_3^*,(e_1+e_2)(e_1+e_2)^*,(e_2+e_3)(e_2+e_3)^*\}$ is linearly independent.
\end{example}

Next we remark that the assumption $n\leq 2d-1$ is necessary:

\begin{example}\label{ex2}
Let $\{e_1,e_2\}$ be an orthonormal basis for $\mathbb{C}^2$ and consider the frame $\{e_1,e_2,e_1+e_2,e_1-e_2\}$.  Clearly this frame is full spark but
$$
e_1e_1^*+e_2e_2^*=I_2=\frac{1}{2}((e_1+e_2)(e_1+e_2)^*+(e_1-e_2)(e_1-e_2)^*).
$$
\end{example}

Finally we remark that with only slight modifications the proof of Propostion \ref{spark} can be used to prove the following more general result:

\begin{proposition}\label{spark2}
If $\mathrm{spark}(\{\varphi_i\}_{i=1}^n)\geq s$ then $\mathrm{spark}(\{\varphi_i\varphi_i^*\}_{i=1}^n)\geq 2s-2$.
\end{proposition}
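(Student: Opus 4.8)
The plan is to argue by contradiction, adapting the image-of-the-frame-operator computation from the proof of Proposition~\ref{spark}, with the quantitative input $n\le 2d-1$ replaced by a pigeonhole argument. Write $t=\mathrm{spark}(\{\varphi_i\varphi_i^*\}_{i=1}^n)$ and suppose toward a contradiction that $t\le 2s-3$. I would choose a minimal linearly dependent subset $\{\varphi_i\varphi_i^*\}_{i\in K}$ with $|K|=t$. By minimality the vanishing relation $\sum_{i\in K}c_i\varphi_i\varphi_i^*=0$ has every $c_i\neq 0$, so splitting $K$ by the sign of $c_i$ into $I=\{i\in K:c_i>0\}$ and $J=\{i\in K:c_i<0\}$ yields
$$
\sum_{i\in I}c_i\varphi_i\varphi_i^*=\sum_{j\in J}(-c_j)\varphi_j\varphi_j^*,
$$
a single matrix written as a positive combination of rank-one outer products in two ways, with $I\cap J=\emptyset$ and both $I,J$ nonempty (a positive combination of nonzero positive semidefinite rank-one matrices cannot vanish).

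Next I would read off the column space of this common matrix in two ways. Exactly as in the proof of Proposition~\ref{spark}, a positive combination of the matrices $\varphi_i\varphi_i^*$ has range equal to the span of the participating vectors, so
$$
\mathrm{span}\{\varphi_i\}_{i\in I}=\mathrm{Im}\Big(\sum_{i\in I}c_i\varphi_i\varphi_i^*\Big)=\mathrm{Im}\Big(\sum_{j\in J}(-c_j)\varphi_j\varphi_j^*\Big)=\mathrm{span}\{\varphi_j\}_{j\in J}.
$$
The final step converts the size bound into a small dependent subset of the original frame. Since $|I|+|J|=t\le 2s-3$, I may assume $|I|\le|J|$, whence $2|I|\le 2s-3$ and so $|I|\le s-2$. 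Picking any $j_0\in J$, the inclusion $\varphi_{j_0}\in\mathrm{span}\{\varphi_j\}_{j\in J}=\mathrm{span}\{\varphi_i\}_{i\in I}$ together with $j_0\notin I$ shows that $\{\varphi_i\}_{i\in I\cup\{j_0\}}$ is linearly dependent of size $|I|+1\le s-1<s$, contradicting $\mathrm{spark}(\{\varphi_i\}_{i=1}^n)\ge s$. Hence $t\ge 2s-2$.

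I expect the only point needing explicit care — the step inherited tacitly from Proposition~\ref{spark} — is the claim that $\mathrm{Im}(\sum_{i\in I}c_i\varphi_i\varphi_i^*)=\mathrm{span}\{\varphi_i\}_{i\in I}$ when the $c_i$ are positive. This holds because for Hermitian positive semidefinite summands the kernel of the sum equals the intersection of the kernels, so the range, being its orthogonal complement, is the sum of the individual ranges; positivity of the coefficients is precisely what rules out cancellation. Everything else is bookkeeping: confirming that $I$ and $J$ are nonempty and rewriting $t\le 2s-3$ as $\min(|I|,|J|)\le s-2$.
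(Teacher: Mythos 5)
Your proof is correct and is precisely the ``slight modification'' of the proof of Proposition~\ref{spark} that the paper alludes to without writing out: split a minimal dependence of the outer products by sign, equate the two spans, and note that adjoining one vector from the larger index set to the smaller one produces a dependent subset of size at most $\lfloor(2s-3)/2\rfloor+1\le s-1$. The points you flag for care (nonemptiness of $I$ and $J$, and the identification of the image of a positive combination of the $\varphi_i\varphi_i^*$ with $\mathrm{span}\{\varphi_i\}_{i\in I}$) are handled correctly.
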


Unfortunately, the converse of Proposition \ref{spark2} is still not true.  The main problem here is that given any three vectors such that no one of them is a scalar multiple of another, the corresponding outer products will be linearly independent (we leave the proof of this as an exercise).  Therefore it is easy to make examples (such as Example \ref{ex1} above) of frames that have tiny spark, but the corresponding outer products are linearly independent.

We conclude our discussion of spark by remarking that in \cite{fullspark} it is shown that computing the spark of a general frame is NP-hard.  Thus, the small amount of insight we gain from Proposition \ref{spark2} is of little practical use.

Another property worth mentioning in this section is known as the \textit{complement property}.  A frame $\{\varphi_i\}_{i=1}^n\subseteq\mathbb{C}^d$ has the complement property if for every $I\subseteq\{1,...,n\}$ we have either $\mathrm{span}(\{\varphi_i\}_{i\in I})=\mathbb{C}^d$ or $\mathrm{span}(\{\varphi_i\}_{i\in I^c})=\mathbb{C}^d$.  We remark that the complement property is usually discussed for frames in a real vector space, but for our purposes it is fine to discuss it for frames in a complex space.  In \cite{phaseless} the complement property was shown to be necessary and sufficient to do phaseless reconstruction in the real case.

If a frame $\{\varphi_i\}_{i=1}^n\subseteq\mathbb{C}^d$ has the complement property then clearly we must have $n\geq 2d-1$ (if not we could partition the frame into two sets each of size at most $d-1$) and that in this case full spark implies the complement property.  If $n=2d-1$ then the complement property is equivalent to full spark, but for $n>2d-1$ the complement property is (slightly) weaker.  One might ask if the complement property tells us anything about the linear independence of the outer products, or vice versa.  Example \ref{ex1} above is an example of a frame which does not have the complement property but the outer products are linearly independent, and Example \ref{ex2} is an example of a frame that does have the complement property but the outer products are linearly dependent.  So it seems like the complement property has nothing to do with the linear independence of the outer products.

Given a frame with the complement property we can add any set of vectors to it without losing the complement property.  Thus it seems natural to ask whether every frame with the complement property has a subset of size $2d-1$ which is full spark.  This also turns out to be not true as the following example shows:

\begin{example}
Consider the frame in Example \ref{ex1} with the vector $e_1+e_3$ added to it.  It is not difficult to verify that this frame does have the complement property, but no subset of size 5 is full spark.
\end{example}

We conclude by noting that as in the proof of Proposition \ref{spark}, a set of outer products $\{\varphi_i\varphi_i^*\}_{i=1}^n$ is linearly dependent if and only if we have an equation of the form
$$
\sum_{i\in I}a_i\varphi_i\varphi_i^*=\sum_{j\in J}b_j\varphi_j\varphi_j^*
$$
with $a_i>0$ for every $i\in I$, $b_j>0$ for every $j\in J$, and $I\cap J=\emptyset$.  This is equivalent to $\{\varphi_i\}_{i=1}^n$ having two disjoint subsets, namely $\{\varphi_i\}_{i\in I}$ and $\{\varphi_j\}_{j\in J}$, which can be scaled to have the same frame operator.  Thus, determining whether $\{\varphi_i\varphi_i^*\}_{i=1}^n$ is linearly independent is equivalent to solving a more difficult scaling problem than the one presented in this paper.

\section*{Acknowledgment}

The authors would like to thank Peter Casazza and Dustin Mixon for insightful conversations during the writing of this paper.

\end{document}